\newtheorem{theorem}{Theorem}
\newtheorem{lemma}{Lemma}
\newtheorem{corollary}{Corollary}
\newenvironment{proof}{\smallskip\noindent{\bf Proof}\rm}
{\hfill $\Box$\medskip}
\newcommand{\be}{\begin{equation}}
\newcommand{\ee}{\end{equation}}
\newcommand{\ba}{\begin{array}}
\newcommand{\ea}{\end{array}}
\newcommand{\bea}{\begin{eqnarray*}}
\newcommand{\eea}{\end{eqnarray*}}
\newcommand{\bean}{\begin{eqnarray}}
\newcommand{\eean}{\end{eqnarray}}
\makeatletter \@addtoreset{equation}{section}
\begin{document}

\title{ Eigenvalue ratios for vibrating String equations with concave densities }

\author{Jihed Hedhly \thanks{Facult\'e des Sciences de Tunis, Universit\'{e} El-Manar,
 Laboratoire Equations aux D\'{e}riv\'{e}es Partielles,
, jihed.hedhly@fst.utm.tn}}

\date{}

 \maketitle

\begin{abstract} In this paper, we prove the optimal lower bound
$\frac{\lambda_n}{\lambda_m}\geq(\frac{n}{m})^2$ of vibrating string
$$-y''=\lambda\rho(x) y,$$ with Dirichlet boundary conditions for concave densities.
Our aproach is based on the method of Huang [Proc. AMS., 1999]. The
main argument is to restrict the two consecutive eigenfunction
$y_{n-1}$ and $y_n$ between two successive zeros of $y_{n-1}$.
%
 We also prove the same result for the Dirichlet
Sturm-Liouville problems.

\end{abstract}
~~~~~~~~~~\\
~~~~~~~~~~\\
{\it{2000 Mathematics Subject Classification}}. Primary 34L15, 34B24. \\
{\it{Key words and phrases}}. Sturm-Liouville Problems, eigenvalue
ratio, single-barrier, single-well, Pr\"{u}fer substitution.
\maketitle
\section{ Introduction}
We consider the Sturm-Liouville equation acting on $[0,1]$
\begin{equation} \label{1.1S}
-(p(x)y^{\prime})^ {\prime }+q(x)y=\lambda \rho (x)y,
\end{equation}
 with Dirichlet boundary
conditions
\begin{equation}
y(0)=y(1)=0  \label{1.2},
\end{equation}
where $p>0$, $\rho>0$ and $q$ (may change sign) are continuous
coefficients on $[0,1]$. Here we limit ourselves to the case
$\rho>0$. The case $\rho<0$ has been considered for related problems
providing different results, we refer to pioneering works
\cite{AC1,AC2} and some refer therein.

As is well-known (see \cite{5}), there exist two countable sequences
of eigenvalues
$$\lambda
_{1}<\lambda _{2}<\dots<\lambda _{n}\dots\infty.$$
  The issues of optimal estimates for the eigenvalue ratios
$\frac{\lambda_n}{\lambda_m}$ have attracted a lot of attention
(cf.\cite{1,2',JJ,J,H99,H2016,4,MH,5',M'}) and references therein.
Ashbaugh and Benguria proved in  \cite{2'} that if $q\geq0$ and
$0<k\leq p\rho(x)\leq K$, then the eigenvalues of
\eqref{1.1S}-\eqref{1.2} satisfy
%
\begin{eqnarray*}
\frac{\lambda _{n}}{\lambda _{1}}\leq \frac{Kn^{2}}{k}.
\end{eqnarray*} 
They also established the following ratio estimate (of two arbitrary
eigenvalues) $$\frac{\lambda _{n}}{\lambda _{m}}\leq
\frac{Kn^2}{km^2},\quad n> m\geq1,$$ with $q\equiv0$ and $0<k\leq
p\rho(x)\leq K$. Later, Huang and Law \cite{4} extended the results
in \cite{2'} to more general boundary conditions. Racently, J.
Hedhly \cite{J}, showed that $$\frac{\lambda _{n}}{\lambda _{m}}\leq
\frac{n^2}{m^2},\quad n> m\geq1,$$ for single-barrier potential $q$
and single-well $p\rho$. He also established that the eigenvalues
for the string equation
\begin{equation} \label{1.1}
-y''=\lambda\rho(x) y,
\end{equation} with Dirichlet boundary
conditions \eqref{1.2} satisfy
$$\frac{\lambda _{n}}{\lambda _{m}}\leq
\frac{n^2}{m^2},\quad n> m\geq1,$$ with
single-well density $\rho$. 
  \\
 Recall that $f$ is a single-barrier (resp.
single-well) function on $[0, 1]$ if there is a point $x_{0} \in[0,
1]$ such that $f$ is increasing (resp. decreasing) on $[0, x_{0}]$
and decreasing (resp. increasing) on $[x_{0}, 1]$ (see  \cite{2}).

In this paper, we prove the optimal lower bound
$\frac{\lambda_n}{\lambda_m}\geq(\frac{n}{m})^2$
of\eqref{1.1}-\eqref{1.2} for concave density $\rho$. Our aproach is
based on the method of Huang [Proc. AMS., 1999]. The main argument
is to restrict the two consecutive eigenfunction $y_{n-1}$ and $y_n$
between two successive zeros of $y_{n-1}$, say $x_i$ and $x_{i+1}$.
We prove arguing as in $[x_i,x_{i+1}]$ that
$$\int_{x_i}^{x_{i+1}}x(y_{n-1}(x,\tau)^2-y_n(x,\tau)^2)\geq0.$$ We
also prove an result for the Dirichlet Sturm-Liouville problems
\eqref{1.1S}-\eqref{1.2}. More precisely, we show that
$\frac{\lambda_n}{\lambda_m}\geq(\frac{n}{m})^2$ with $q\equiv0$ and
$p\rho$ concave.
\section{Eigenvalue ratio for the vibrating String equations }
Denote by $u_n(x)$ be the $n-th$ eigenfunction of \eqref{1.1}
corresponding to $\lambda_n$, normalized so that
$$\int_0^1\rho(x) u_n^2(x)dx=1.$$

It is well known that the $u_n(x)$ has exactly $(n-1)$ zeros  in the
open interval $(0,1)$. The zeros of the $n-th$ and $(n +1)st$
eigenfunctions interlace, i.e. between any two successive zeros of
the $n-th$ eigenfunction lies a zero of the $(n+1)st$ eigenfunction.
\\ We denote by $(y_i)_i$ the zeros of $u_n$ and $(z_i)_i$ the zeros of
$u_{n-1}$, then in view of the comparison theorem (see
\cite[Chap.1]{5}), we have $y_i<z_i$. We may assume that $u_n(x)>0$
and $u_{n-1}(x)>0$ on $(0,y_1)$, then we have
$\frac{u_n(x)}{u_{n-1}(x)}$ is strictly decreasing on $(0, 1).$ In
deed,
$$(\frac{u_n(x)}{u_{n-1}(x)})'=\frac{u'_n(x)u_{n-1}(x)-u'_{n-1}(x)u_{n}(x)}{u^2_{n-1}(x)}
=\frac{w(x)}{u^2_{n-1}(x)}.$$ We find
$$w'(x)=u''_n(x)u_{n-1}(x)-u''_{n-1}(x)u_{n}(x)=(\lambda_{n-1}-\lambda_n)\rho(x)u_n(x)u_{n-1}(x),$$
this implies that $w(x) < 0$ on $(0, 1)$. Hence
$\frac{u_n(x)}{u_{n-1}(x)}$ is strictly decreasing on $(0, 1).$ From
this, there are points $x_i\in(y_i,z_i)$ such that
\begin{eqnarray*}  \left\{
\begin{array}{ll} u^2_n(x)>u^2_{n-1}(x),\ \ \ x\in(x_{2i},x_{2i+1}),\\~~\\
 u^2_n(x)<u^2_{n-1}(x),\ \ \ x\in(x_{2i+1},x_{2i+2}).
 \end{array}
 \right.
\end{eqnarray*}

Let $\rho(., \tau)$ is a one-parameter family of piecewise
continuous densities such that $\frac{\partial\rho(.,
\tau)}{\partial\tau}$ exists, and let $u_n(x,\tau)$ be the $n-th$
eigenfunction of \eqref{1.1} corresponding to $\lambda_n(\tau)$ of
the corresponding String equation \eqref{1.1} with $\rho=\rho(.,
\tau)$.
 From Keller in \cite{KL}, we get
$$\frac{d}{d\tau}\lambda_n(\tau)=-\lambda_n(\tau)
\int_0^1\frac{\partial\rho}{\partial \tau}(x,\tau)u_n^2(x,\tau)dx.$$
By straightforward computation that, yields
\begin{equation}\label{Ra}\frac{d}{d\tau}\Big[\frac{\lambda_n(\tau)}{\lambda_{m}(\tau)}\Big]=
\frac{\lambda_n(\tau)}{\lambda_{m}(\tau)}
\int_0^1\frac{\partial\rho}{\partial
\tau}(x,\tau)(u_{m}^2(x,\tau)-u_n^2(x,\tau))dx.\end{equation}

 We are now in position to state our main result.
\begin{theorem}\label{theo11}
Let $\rho$ a concave density on $[0,1]$. Then the eigenvalues of the
Dirichlet problem \eqref{1.1}-\eqref{1.2} satisfy
   \begin{equation}\label{RA}
   \frac{\lambda_{n}}{\lambda_{m}}\geq(\frac{n}{m})^2,
   \end{equation}
with equality if and only if $\rho$ is constant.
 \end{theorem}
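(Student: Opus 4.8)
The plan is to combine the variational formula \eqref{Ra} with a deformation of $\rho$, through concave densities, to a constant density, for which $\lambda_n/\lambda_m$ is computed explicitly, and to control the sign of the derivative of the ratio along this deformation by localizing the two eigenfunctions between consecutive zeros of the one of lower index. Since
$$\frac{\lambda_n}{\lambda_m}=\prod_{k=m+1}^{n}\frac{\lambda_k}{\lambda_{k-1}}\qquad\text{and}\qquad\Big(\frac nm\Big)^2=\prod_{k=m+1}^{n}\Big(\frac{k}{k-1}\Big)^2,$$
it is enough to treat consecutive eigenvalues: once $\lambda_n/\lambda_{n-1}\ge\big(n/(n-1)\big)^2$ is known for every $n\ge2$, with equality only when $\rho$ is constant, the general inequality follows, and equality in $\lambda_n/\lambda_m$ forces equality in each factor $\lambda_k/\lambda_{k-1}$ and hence that $\rho$ is constant.

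Fix a constant $c>0$ and put $\rho(x,\tau)=(1-\tau)\rho(x)+\tau c$ for $\tau\in[0,1]$. Each $\rho(\cdot,\tau)$ is a positive concave density, $\rho(\cdot,0)=\rho$, and $\rho(\cdot,1)\equiv c$; for the constant density $u_k(x,1)=\sqrt{2/c}\,\sin(k\pi x)$ and $\lambda_k(1)=k^2\pi^2/c$, so $\lambda_n(1)/\lambda_{n-1}(1)=\big(n/(n-1)\big)^2$. With $u_k(\cdot,\tau)$ normalized by $\int_0^1\rho(x,\tau)u_k^2(x,\tau)\,dx=1$ one has $\int_0^1\rho(x,\tau)\big(u_{n-1}^2-u_n^2\big)\,dx=0$, and since $\partial_\tau\rho(x,\tau)=c-\rho(x)$, formula \eqref{Ra} together with this identity gives, after a short computation,
$$\frac{d}{d\tau}\Big[\frac{\lambda_n(\tau)}{\lambda_{n-1}(\tau)}\Big]=\frac{c}{1-\tau}\,\frac{\lambda_n(\tau)}{\lambda_{n-1}(\tau)}\int_0^1\big(u_{n-1}^2(x,\tau)-u_n^2(x,\tau)\big)\,dx,\qquad 0\le\tau<1.$$
Hence, the ratio being then non-increasing from its value at $\rho$ ($\tau=0$) to the value $\big(n/(n-1)\big)^2$ at the constant density ($\tau=1$), the estimate $\lambda_n/\lambda_{n-1}\ge\big(n/(n-1)\big)^2$ will follow once we establish
$$(\ast)\qquad\int_0^1\big(u_{n-1}^2(x)-u_n^2(x)\big)\,dx\le0\qquad\text{for every concave density }\rho .$$

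For $(\ast)$, observe that $x\mapsto1-x$ maps concave densities to concave densities, preserves the eigenvalues, replaces each $u_k$ by $u_k(1-\cdot)$, and leaves $\int_0^1u_k^2$ unchanged; hence, if
$$\int_0^1 x\,\big(u_{n-1}^2(x)-u_n^2(x)\big)\,dx\le0$$
holds for every concave density, then applying it to $\rho$ and to $\rho(1-\cdot)$ and adding (the weights $x$ and $1-x$ summing to $1$) yields $(\ast)$. To prove this $x$-weighted inequality I would use the interlacing structure recalled before the statement: writing $0=z_0<z_1<\dots<z_{n-1}=1$ for the zeros of $u_{n-1}$, each $[z_i,z_{i+1}]$ contains exactly one zero of $u_n$, on it $u_{n-1}^2-u_n^2$ is negative near the endpoints and positive in between, and its sign changes occur at the points $x_j$ introduced above. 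On each $[z_i,z_{i+1}]$ I would introduce the Pr\"ufer phase and amplitude of $u_{n-1}$ and of $u_n$, use that $u_n$ accumulates slightly more phase than $u_{n-1}$ (because $\lambda_n>\lambda_{n-1}$), and exploit the concavity of $\rho$ — i.e.\ $\rho''\le0$; this nonnegativity of $-\rho''$, paired with the Green kernel $G(x,s)=\min\{x(1-s),\,s(1-x)\}$ of $-d^2/dx^2$ on $[0,1]$, is also what brings in the weight $x$ — to show that the contributions $\int_{z_i}^{z_{i+1}}x\,(u_{n-1}^2-u_n^2)\,dx$ add up to a nonpositive total. This local comparison of the two squared eigenfunctions on an interval where one of them does not vanish and the other changes sign exactly once is where the concavity hypothesis is used, and is the step I expect to be the main obstacle.

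For the equality statement, if $\lambda_n/\lambda_{n-1}=\big(n/(n-1)\big)^2$ for a concave $\rho$, then along the deformation the non-increasing function $\tau\mapsto\lambda_n(\tau)/\lambda_{n-1}(\tau)$ takes the same value at the two endpoints of $[0,1]$, hence is constant; the displayed derivative therefore vanishes identically, so $\int_0^1\big(u_{n-1}^2-u_n^2\big)\,dx=0$ for $\rho$. Since the proof of $(\ast)$ gives a strict inequality unless every local comparison is an equality — which forces the eigenfunctions to be sinusoidal and hence $\rho$ to be constant — we conclude that $\rho$ is constant; the converse is the elementary computation for the constant density, and the equality statement for general $m<n$ follows from that of the factor $\lambda_{m+1}/\lambda_m$.
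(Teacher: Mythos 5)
Your reduction to consecutive eigenvalues, the choice of the deformation $\rho(x,\tau)=(1-\tau)\rho(x)+\tau c$, and the computation turning \eqref{Ra} into
$$\frac{d}{d\tau}\Big[\frac{\lambda_n(\tau)}{\lambda_{n-1}(\tau)}\Big]=\frac{c}{1-\tau}\,\frac{\lambda_n(\tau)}{\lambda_{n-1}(\tau)}\int_0^1\big(u_{n-1}^2(x,\tau)-u_n^2(x,\tau)\big)\,dx$$
are all correct, and the strategy is the same in spirit as the paper's (which instead deforms $\rho$ to the piecewise-linear interpolant $\hat{\rho}$ at the zeros of $u_{n-1}$ and disposes of $\hat{\rho}$ through a separate lemma on linear densities). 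But your argument stops exactly where the theorem begins: the inequality $(\ast)$, namely $\int_0^1(u_{n-1}^2-u_n^2)\,dx\le 0$ for every concave density, carries the entire content of the result, and you do not prove it --- you describe a plan (Pr\"ufer phases, the Green kernel, summing nodal contributions) and explicitly label it ``the main obstacle''. As it stands this is a reduction of the theorem to an unproved claim, not a proof.

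Moreover, the one concrete reduction you do propose for $(\ast)$ cannot work. You want to deduce $(\ast)$ from the statement that $\int_0^1 x\,(u_{n-1}^2-u_n^2)\,dx\le 0$ for \emph{every} concave density, by applying it to $\rho$ and to $\rho(1-\cdot)$ and adding. But the increasing linear densities $\rho(x)=\tau x+b$, $\tau>0$, are concave, and for them Lemma \ref{lm} of the paper asserts precisely the opposite strict sign, $\int_0^1 x\,(y_{n-1}^2-y_n^2)\,dx>0$. This is not an accident: the normalization $\int_0^1(\tau x+b)\,y_k^2\,dx=1$ for $k=n-1,n$ gives $\int_0^1(y_{n-1}^2-y_n^2)\,dx=-\frac{\tau}{b}\int_0^1 x\,(y_{n-1}^2-y_n^2)\,dx$, so for such densities the $x$-weighted and unweighted integrals have opposite signs, and the monotonicity in $\tau$ of the ratio (which your own conclusion requires at linear densities) forces the $x$-weighted integral to be positive. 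Hence the sufficient condition you propose to verify is false, even though $(\ast)$ itself is consistent with these identities. The missing step has to be attacked differently --- for instance, as the paper does, by comparing $\rho$ with its piecewise-linear interpolant on each nodal interval $[z_i,z_{i+1}]$ of $u_{n-1}$, where concavity supplies the signed weight $\rho-\hat{\rho}\ge0$.
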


In order to prove Theorem \ref{theo11} we need some preliminary
results, in particular the following result by Huang \cite{H99}.
\begin{lemma} \label{LM1}\cite{H99}
If $g$ is three times differentiable and $u$ satisfies
$$-y''=\lambda\rho(x) y,\ \ \ 0\leq x\leq1, \ \ \ y(0)=y(1)=0,$$\\ where $\rho$
is differentiable, then
$$g(1)y'(1)^2-g(0)y'(0)^2=\int_0^1\Big[2\lambda g'(x)\rho(x)+\lambda g(x)\rho'(x)+\frac{1}{2}g'''(x)\Big]y^2(x)dx.$$
\end{lemma}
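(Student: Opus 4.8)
The plan is to derive the identity purely by integration by parts, starting from the total derivative $\frac{d}{dx}\big[g(x)y'(x)^2\big]$ and systematically removing every derivative of $y$, using the equation $y''=-\lambda\rho y$ and the Dirichlet conditions $y(0)=y(1)=0$ to discard boundary terms. Integrating over $[0,1]$ produces the left-hand side immediately, since $g(1)y'(1)^2-g(0)y'(0)^2=\int_0^1\frac{d}{dx}[g\,y'^2]\,dx$, and the integrand on the right expands to $g'y'^2+2g\,y'y''$.

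First I would treat the cross term. Replacing $y''$ by $-\lambda\rho y$ gives $2g\,y'y''=-2\lambda g\rho\,y y'=-\lambda g\rho\,(y^2)'$, and integration by parts yields $\lambda\int_0^1 (g\rho)'y^2\,dx=\lambda\int_0^1(g'\rho+g\rho')y^2\,dx$, the boundary term $[g\rho y^2]_0^1$ vanishing by the Dirichlet conditions.

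Next I would process the remaining term $\int_0^1 g'y'^2\,dx$. Writing it as $\int_0^1 (g'y')\,y'\,dx$ and integrating by parts with $y$ as the antiderivative of $y'$ kills a boundary term proportional to $y$ and leaves $-\int_0^1(g''y'+g'y'')y\,dx$. In the piece containing $y''$ I again invoke the equation to obtain $+\lambda\int_0^1 g'\rho y^2\,dx$; in the piece $-\int_0^1 g''y'y\,dx=-\tfrac12\int_0^1 g''(y^2)'\,dx$ one further integration by parts, with vanishing boundary term, gives $+\tfrac12\int_0^1 g'''y^2\,dx$. Hence $\int_0^1 g'y'^2\,dx=\int_0^1\big[\lambda g'\rho+\tfrac12 g'''\big]y^2\,dx$.

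Adding the two contributions collects the coefficient of $y^2$ into $2\lambda g'\rho+\lambda g\rho'+\tfrac12 g'''$, exactly the claimed integrand. The computation is routine once the bookkeeping is fixed; the only point requiring care is to verify that every boundary term generated by the successive integrations by parts is proportional to $y(0)$ or $y(1)$ and therefore vanishes, so that the sole surviving boundary contribution is the $g\,y'^2$ term from the very first step. The hypotheses that $g$ be three times differentiable and $\rho$ differentiable are precisely what legitimize all three integrations by parts.
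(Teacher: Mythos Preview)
Your proof is correct. The paper does not supply its own proof of this lemma but simply cites it from Huang \cite{H99}; your integration-by-parts derivation is the standard argument and is exactly the one given there.
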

\begin{lemma}\label{lm}
Consider the one-parameter family of linear densities $\rho(x, \tau)
= \tau x+b$, where $t > 0$ and $b$ is a positive constant. Let
$\lambda_n(\tau)$ be the $nth$ eigenvalue of \eqref{1.1}-\eqref{1.2}
with $\rho = \rho(x,\tau).$ Then the ratio
$\frac{\lambda_n(\tau)}{\lambda_{n-1}(\tau)}$ is a strictly
increasing function of t.
\end{lemma}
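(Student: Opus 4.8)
The plan is to differentiate the ratio along the family by \eqref{Ra}, reduce it to a weighted $L^{2}$ comparison of $u_{n-1}$ and $u_n$, and then establish that comparison block by block between the zeros of $u_{n-1}$, as outlined in the Introduction. Concretely, fix $\tau>0$, put $m=n-1$ in \eqref{Ra}, and use $\partial\rho/\partial\tau=x$ together with $\lambda_n(\tau)/\lambda_{n-1}(\tau)>0$ to see that the sign of $\frac{d}{d\tau}\bigl[\lambda_n(\tau)/\lambda_{n-1}(\tau)\bigr]$ is the sign of
\begin{equation*}
  I(\tau):=\int_0^1 x\,\bigl(u_{n-1}^2(x,\tau)-u_n^2(x,\tau)\bigr)\,dx ,
\end{equation*}
so that the lemma amounts to $I(\tau)>0$ for every $\tau>0$. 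Since $\rho=\tau x+b$, the normalizations $\int_0^1\rho\,u_k^2\,dx=1$ for $k=n-1,n$ give the equivalent form $I(\tau)=\frac{b}{\tau}\bigl(\int_0^1 u_n^2\,dx-\int_0^1 u_{n-1}^2\,dx\bigr)$, i.e.\ $\|u_n\|_{L^2}^2>\|u_{n-1}\|_{L^2}^2$; this is a convenient reformulation but not a shortcut, since the weight $x$ cannot be dropped — the corresponding unweighted per-block inequalities need not hold, already when $\rho$ is constant.

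To prove $I(\tau)>0$ I would localize exactly as in the Introduction. Let $0=z_0<z_1<\dots<z_{n-2}<z_{n-1}=1$ be the zeros of $u_{n-1}$ in $[0,1]$. On each block $J_i=[z_i,z_{i+1}]$ the eigenfunction $u_{n-1}$ is a one-signed bump vanishing at both endpoints, while by the interlacing recalled above $u_n$ has exactly one zero $y_{i+1}\in(z_i,z_{i+1})$ in $J_i$; moreover the monotonicity of $u_n/u_{n-1}$ established above forces $u_{n-1}^2-u_n^2$ to be negative near the nodes $z_i,z_{i+1}$ and positive on a single subinterval around $y_{i+1}$. The target on each block is then
\begin{equation*}
  \int_{z_i}^{z_{i+1}} x\,\bigl(u_{n-1}^2-u_n^2\bigr)\,dx\;\ge\;0 ,\qquad i=0,1,\dots,n-2 ,
\end{equation*}
with strict inequality on at least one block because $\rho$ is nonconstant; summing over $i$ then yields $I(\tau)>0$, while the case of equality collapses to $\tau=0$, that is, $\rho$ constant.

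For the per-block estimate I would use the localized form of Huang's identity (Lemma \ref{LM1}): running the same computation on $[z_i,z_{i+1}]$ but keeping the boundary expression $\bigl[g(u')^2-g'u'u+\tfrac12 g''u^2+\lambda\rho g\,u^2\bigr]_{z_i}^{z_{i+1}}$ is legitimate because $u_{n-1}$ and $u_n$ solve the equation on all of $[0,1]$. Since $\rho=\tau x+b$ one may take $g$ affine so that $\tfrac12 g'''+2\lambda g'\rho+\lambda g\rho'$ reduces to $3\lambda\tau\,x$; this converts $\int_{J_i}x\,u_{n-1}^2$ and $\int_{J_i}x\,u_n^2$ into explicit functions of the boundary data $u_{n-1}'(z_i),u_{n-1}'(z_{i+1})$ and $u_n(z_i),u_n(z_{i+1}),u_n'(z_i),u_n'(z_{i+1})$. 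One then compares the two expressions using the sign pattern above, the ordering $\lambda_{n-1}<\lambda_n$, and the monotonicity of $u_n/u_{n-1}$, which controls how the two eigenfunctions compare near the common node $z_i$; the increasing weight $\rho$, that is $\tau>0$, is precisely what makes the comparison come out with the correct sign. Alternatively, the same per-block inequality can be read off from the Pr\"ufer angles of $u_{n-1}$ and $u_n$ on $J_i$.

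The main obstacle is exactly this per-block step. Because $u_n$ does not vanish at the interior nodes $z_i$, the localized identity for $u_n$ carries genuine nonzero boundary contributions there — in contrast with the global statement of Lemma \ref{LM1} — and the heart of the matter is the sharp quantitative assertion that, on each $J_i$, the $x$-weighted positive contribution of $u_{n-1}^2-u_n^2$ around $y_{i+1}$ is never outweighed by its two negative tails near $z_i$ and $z_{i+1}$; this has to be extracted from the precise behaviour of $u_{n-1}/u_n$ near those points. The remaining ingredients — the reduction through \eqref{Ra}, the summation over the blocks, and the equality discussion — are routine.
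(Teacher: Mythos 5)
Your reduction via \eqref{Ra} to the positivity of $I(\tau)=\int_0^1 x\,(u_{n-1}^2-u_n^2)\,dx$ and your decomposition over the nodal intervals $[z_i,z_{i+1}]$ of $u_{n-1}$ coincide with the paper's. But the proposal stops exactly where the lemma actually lives: you state the per-block inequality $\int_{z_i}^{z_{i+1}}x\,(u_{n-1}^2-u_n^2)\,dx\ge 0$ as a target, sketch two possible routes (a localized Huang identity with boundary terms, or Pr\"ufer angles), and then concede that ``the heart of the matter'' --- showing that the positive hump of $u_{n-1}^2-u_n^2$ around the interior zero of $u_n$ outweighs the two negative tails under the weight $x$ --- remains to be extracted. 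That is not a proof; nothing in the proposal actually controls the competition between the positive and negative contributions, and the sign pattern together with $\lambda_{n-1}<\lambda_n$ alone does not visibly force the weighted integral to be nonnegative.

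The paper closes this gap by a mechanism you did not find. Applying Lemma \ref{LM1} on $[z_i,z_{i+1}]$ with $g(x)=x$ and with $g(x)=x^2$, and invoking the normalization, it derives the two linear relations \eqref{xi} and \eqref{xii}, which express $\int_{z_i}^{z_{i+1}}(y_{n-1}^2-y_n^2)\,dx$ and $\int_{z_i}^{z_{i+1}}x^2(y_{n-1}^2-y_n^2)\,dx$ as explicit scalar multiples of $\ll x(\tau)\gg_i$ itself. Consequently $\int_{z_i}^{z_{i+1}}(Ax^2+Bx+C)(y_{n-1}^2-y_n^2)\,dx$ is, for \emph{every} quadratic, a computable multiple of $\ll x(\tau)\gg_i$; choosing the quadratic $-(x-x_{2i+1})(x-x_{2i+2})$, whose sign agrees with that of $y_{n-1}^2-y_n^2$ throughout the block by \eqref{yny}, pins down the sign of $\ll x(\tau)\gg_i$ (the paper first rules out $\ll x(\tau)\gg_i=0$ this way, then uses continuity in $\tau$ to exclude the negative alternative). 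This ``test against an arbitrary quadratic weight'' step is the missing idea in your write-up; without it, or a substitute of equal strength, the per-block inequality --- and hence the lemma --- is unproved. If you do pursue the paper's route, note the very issue you flagged: Lemma \ref{LM1} as stated requires the solution to vanish at both endpoints, which fails for $y_n$ at the interior nodes $z_i$ of $y_{n-1}$, so the localized identities must be rederived with the boundary terms retained and then shown to cancel or to have the right sign.
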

\begin{proof} From \eqref{Ra}
\begin{eqnarray*}&&\frac{d}{d\tau}\Big[\frac{\lambda_n(\tau)}{\lambda_{n-1}(\tau)}\Big]=
\frac{\lambda_n(\tau)}{\lambda_{n-1}(\tau)}
\int_0^1x(y_{n-1}^2(x,\tau)-y_n^2(x,\tau))dx.\end{eqnarray*} So, we
have to show that
\begin{eqnarray}\label{x}\ll x(\tau)\gg=\int_0^1x(y_{n-1}^2(x,\tau)-y_n^2(x,\tau))dx\geq0,
\end{eqnarray} for
all $\tau>0$.\\ Firstly notice that
\begin{eqnarray*}
&&\ll x(\tau)\gg =\int_0^1x(y_{n-1}^2(x,\tau)-y_n^2(x,\tau))dx
\cr&=&\sum_{i=1}^{n-1}\int_{z_i}^{z_{i+1}}x(y_{n-1}^2(x,\tau)-y_n^2(x,\tau))dx.
\end{eqnarray*}
To show \eqref{x}, it suffices to show that
$$\ll x(\tau)\gg_i=\int_{z_i}^{z_{i+1}}x(y_{n-1}^2(x,\tau)-y_n^2(x,\tau))d\tau\geq0$$

 Taking $g(x) = x$ in
Lemma \ref{LM1}, we get
$$  y'_n(z_{i+1},\tau)^2=\lambda_n\int_{z_i}^{z_{i+1}}(3\tau x+2b)+y_n^2(x,\tau))dx,$$
and, with $g(x)=x^2,$
$$y'_n(z_{i+1},\tau)^2=\lambda_n\int_{z_i}^{z_{i+1}}(5\tau x^2+4bx)+y_n^2(x,\tau))dx.$$
Therefore,
\begin{eqnarray}\label{g}
5t\int_{z_i}^{z_{i+1}}x^2y_n^2(x,\tau)dx=(3\tau-4b)\int_{z_i}^{z_{i+1}}x
y_n^2(x,\tau)dx+2ab\int_{z_i}^{z_{i+1}}y_n^2(x,\tau)dx.
\end{eqnarray}
By the normalization condition of $y $, we obtain
$$\int_0^1y_n^2(x,\tau)dx=\frac{1}{b}-\frac{\tau}{b}\int_0^1xy_n^2(x,\tau)dx,$$
yields
\begin{eqnarray*}
&&\int_0^1y_n^2(x,\tau)dx=\sum_{i=0}^{n}\int_{z_i}^{z_{i+1}}y_n^2(x,\tau)dx
\cr&=&\sum_{i=0}^{n}\Big[\frac{1}{nb}-
\frac{\tau}{b}\int_{z_i}^{z_{i+1}}xy_n^2(x,\tau)dx\Big].
\end{eqnarray*}
 Then
$$\int_{z_i}^{z_{i+1}}y_n^2(x,\tau)dx=\frac{1}{nb}-
\frac{\tau}{b}\int_{z_i}^{z_{i+1}}xy_n^2(x,\tau)dx.$$ Thus, from
\eqref{g}, we get
$$\int_{z_i}^{z_{i+1}}x^2y_n^2(x,\tau)dx=\frac{2}{5\tau}+\frac{\tau-4b}{b}
\int_{z_i}^{z_{i+1}}xy_n^2(x;\tau)dx.$$ From this, it follows that
\begin{eqnarray}\label{xi}\int_{z_i}^{z_{i+1}}[y_{n-1}^2(x,\tau)-y_n(x,\tau)^2]dx
=\frac{-\tau}{b}\ll x(\tau)\gg_i\end{eqnarray} and
\begin{eqnarray}\label{xii}\int_{z_i}^{z_{i+1}}x^2[y_{n-1}^2(x,\tau)-y_n(x,\tau)^2]dx
=\frac{\tau-4b}{5\tau}\ll x(\tau)\gg_i.\end{eqnarray} First of all
notice that $\ll x(\tau)\gg_i\neq0$ for all $\tau>0$. For, if for
some t, $\ll x(\tau)\gg_i=0$ then from \eqref{xi} and \eqref{xii},
we obtain
$$\int_{z_i}^{z_{i+1}}(Ax^2+Bx+c)[y_{n-1}^2(x,\tau)-y_n(x,\tau)^2]=0,$$
where $A$, $B$ and $C$ are arbitrary constants, which is impossible
because there are a points
$z_i(\tau)<x_{2i+1}(\tau)<x_{2i+2}(\tau)<z_{i+1}(\tau)$ such that
\begin{eqnarray}\label{yny}  \left\{
\begin{array}{ll} y^2_n(x,\tau)>y^2_{n-1}(x,\tau),\ \ \ x\in(z_i(\tau),x_{2i+1}(\tau))\cup(x_{2i+2}
(\tau),z_{i+1}(\tau)),\\~~\\
 y^2_n(x,\tau)<y^2_{n-1}(x,\tau),\ \ \ x\in(x_{2i+1}(\tau),x_{2i+2}(\tau)).
 \end{array}
 \right.
\end{eqnarray}
therefore, $\ll x(\tau)\gg_i\neq0$ for all $\tau>0$. Then, according
to the continuity of\\ $\ll x(\tau)\gg_i$, we either have $\ll
x(\tau)\gg_i<0$ or $\ll x(\tau)\gg_i>0.$ We assume the contrary that
$\ll x(\tau)\gg_i<0.$ Then, from this together with \eqref{xi} and
\eqref{xii}, we obtain
\begin{eqnarray}\label{ABC}\int_{z_i}^{z_{i+1}}(Ax^2+Bx+c)[y_{n-1}^2(x,\tau)-y_n(x,\tau)^2]<0,\end{eqnarray}
for all $\tau<4b$. But if we choose $A<0$ , $B>0$ and $C<0$ we get
$$Ax^2+Bx+c=-(x-x_{2i+1}(\tau))(x-x_{2i+12}(\tau)).$$
by \eqref{yny}, we find that \eqref{ABC} is positive. It is a
contradiction with the hypothesis $\ll x(\tau)\gg_i<0.$
\end{proof}
\begin{lemma}\label{theo13}
Let $\hat{\rho}>0$ be function continuous on $[0,1]$ such that
$\hat{\rho}(x)=a_ix+b$ for $x\in[z_i,z_{i+1}]$ . Then the
eigenvalues of Problem \eqref{1.1}-\eqref{1.2} with
$\rho=\hat{\rho},$ satisfy
   \begin{eqnarray}\label{12}
 \frac{\lambda_{n}(\hat{\rho})}{\lambda_{m}(\hat{\rho})}\geq (\frac{n}{m})^2.
 \end{eqnarray}
 Equality holds iff $\hat{\rho}$ is constant in $[0,1]$.
 \end{lemma}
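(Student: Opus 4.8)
The plan is to reduce \eqref{12} to the consecutive case treated in Lemma \ref{lm}, via a deformation that stays inside the class of densities affine on the given subdivision, followed by multiplicative chaining. First, since
$$\frac{\lambda_{n}(\hat\rho)}{\lambda_{m}(\hat\rho)}=\prod_{k=m+1}^{n}\frac{\lambda_{k}(\hat\rho)}{\lambda_{k-1}(\hat\rho)},$$
it suffices to prove, for each $k$, that $\frac{\lambda_{k}(\hat\rho)}{\lambda_{k-1}(\hat\rho)}\ge\big(\frac{k}{k-1}\big)^{2}$ with equality only when $\hat\rho$ is constant; \eqref{12} and its equality case then follow by taking the product over $m<k\le n$. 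Fixing the pair $(n,n-1)$, let $0=z_{0}<z_{1}<\dots<z_{n-1}=1$ be the zeros of $u_{n-1}$, so that $\hat\rho$ is affine on each $[z_{i},z_{i+1}]$ (if necessary one first refines the subdivision to contain these zeros, which does not change $\hat\rho$).

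Next I would embed $\hat\rho$ in a one-parameter family $\rho(\cdot,\tau)$, $\tau\in[0,1]$, of positive densities that are again affine on each $[z_{i},z_{i+1}]$, with $\rho(\cdot,1)=\hat\rho$ and $\rho(\cdot,0)$ a positive constant; e.g.\ contract the graph of $\hat\rho$ linearly toward a horizontal line, so that $\frac{\partial\rho}{\partial\tau}(x,\tau)=\alpha_{i}(\tau)x+\beta_{i}(\tau)$ is affine on each $[z_{i},z_{i+1}]$. Differentiating the ratio by the Keller-type identity \eqref{Ra} gives
$$\frac{d}{d\tau}\Big[\frac{\lambda_{n}(\tau)}{\lambda_{n-1}(\tau)}\Big]=\frac{\lambda_{n}(\tau)}{\lambda_{n-1}(\tau)}\sum_{i=0}^{n-2}\int_{z_{i}}^{z_{i+1}}\big(\alpha_{i}(\tau)x+\beta_{i}(\tau)\big)\big(u_{n-1}^{2}(x,\tau)-u_{n}^{2}(x,\tau)\big)\,dx,$$
so it is enough to show each summand has the sign pushing $\lambda_{n}/\lambda_{n-1}$ toward its value at $\tau=0$.

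On a single interval $[z_{i},z_{i+1}]$ I would rerun the computation behind \eqref{xi} and \eqref{xii}: apply Lemma \ref{LM1} on $[z_{i},z_{i+1}]$ with $g(x)=x$ and with $g(x)=x^{2}$, now retaining the boundary contributions at $z_{i},z_{i+1}$ (these vanish for $u_{n-1}$ but not for $u_{n}$), and use the normalization $\int_{0}^{1}\rho(\cdot,\tau)u_{k}^{2}\,dx=1$ split over the subintervals, in order to express $\int_{z_{i}}^{z_{i+1}}(u_{n-1}^{2}-u_{n}^{2})\,dx$ and $\int_{z_{i}}^{z_{i+1}}x^{2}(u_{n-1}^{2}-u_{n}^{2})\,dx$ as explicit multiples of the single quantity $\ll x(\tau)\gg_{i}=\int_{z_{i}}^{z_{i+1}}x(u_{n-1}^{2}-u_{n}^{2})\,dx$. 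Exactly as in the proof of Lemma \ref{lm}, $\ll x(\tau)\gg_{i}\ne 0$ for every $\tau$ — otherwise the three moments all vanish and a quadratic $Ax^{2}+Bx+C$ integrates to zero against $u_{n-1}^{2}-u_{n}^{2}$ on $[z_{i},z_{i+1}]$, contradicting the sign pattern \eqref{yny} — so by continuity its sign is constant in $\tau$, and taking $Ax^{2}+Bx+C=-(x-x_{2i+1})(x-x_{2i+2})$ together with \eqref{yny} rules out the sign opposite to the one claimed. Hence $\lambda_{n}(\tau)/\lambda_{n-1}(\tau)$ is monotone in $\tau$, and since the density at $\tau=0$ is constant, where $\frac{\lambda_{n}}{\lambda_{n-1}}=\big(\frac{n}{n-1}\big)^{2}$, we get $\frac{\lambda_{n}(\hat\rho)}{\lambda_{n-1}(\hat\rho)}\ge\big(\frac{n}{n-1}\big)^{2}$; strictness of the monotonicity away from constant densities gives the equality statement, and the product over $k$ finishes the proof.

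The step I expect to be the main obstacle is the local bookkeeping in the last paragraph. Lemma \ref{LM1} is stated on $[0,1]$ with both Dirichlet conditions, whereas on $[z_{i},z_{i+1}]$ only $u_{n-1}$ vanishes at the endpoints, so one must carry the terms $u_{n}'(z_{i})^{2},u_{n}'(z_{i+1})^{2}$ through the two applications of the identity and verify they cancel or combine into a usable expression, and the sign analysis must be checked uniformly in $i$ even though the slopes $\alpha_{i}$ now differ from piece to piece (in particular for pieces where the slope is negative). A secondary technical point is to exhibit a concrete deformation $\rho(\cdot,\tau)$ that genuinely stays affine on each $[z_{i},z_{i+1}]$ and positive while joining $\hat\rho$ to a constant. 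Once these are handled, the per-interval sign argument is identical to the one in Lemma \ref{lm}.
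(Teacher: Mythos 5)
Your proposal follows the same route as the paper, but it is considerably more explicit than what the paper actually writes down: the paper's entire proof of this lemma is the single step ``according to Lemma \ref{lm}, $\frac{d}{d\tau}\big[\lambda_n(\tau)/\lambda_{n-1}(\tau)\big]\ge 0$,'' followed by integration from $\tau=0$ and the multiplicative chaining over consecutive indices that you also use. Lemma \ref{lm}, however, is stated and proved only for the globally linear family $\rho(x,\tau)=\tau x+b$, whereas the present lemma concerns a density whose slope $a_i$ changes from one subinterval to the next; the piecewise-affine deformation joining $\hat\rho$ to a constant and the interval-by-interval sign analysis with varying slopes are precisely the missing ingredients, and your plan to supply them is the natural (indeed the only plausible) reading of what the paper intends. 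So in spirit you and the paper agree; in substance you are attempting to prove something the paper merely asserts.

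The two obstacles you flag are genuine, and neither is resolved in the paper, so your argument cannot yet be called complete. First, Lemma \ref{LM1} is an identity for a solution vanishing at both endpoints of the interval on which it is applied; on $[z_i,z_{i+1}]$ (the $z_i$ being zeros of $y_{n-1}$) the function $y_n$ does not vanish at the endpoints, so the two applications of the identity leading to \eqref{g} acquire additional boundary terms involving $y_n(z_i)$, $y_n'(z_i)$, $y_n(z_{i+1})$, $y_n'(z_{i+1})$ which the paper silently drops (even for $y_{n-1}$ the left-hand side should read $z_{i+1}y_{n-1}'(z_{i+1})^2-z_iy_{n-1}'(z_i)^2$, not $y_{n-1}'(z_{i+1})^2$); you are right that one must show these terms cancel or combine usefully, and this is not automatic. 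Second, the per-interval normalization $\int_{z_i}^{z_{i+1}}y_n^2\,dx=\frac{1}{nb}-\frac{\tau}{b}\int_{z_i}^{z_{i+1}}xy_n^2\,dx$, which is what reduces all three moments to the single quantity $\ll x(\tau)\gg_i$, is obtained in the paper by writing $\int_0^1 y_n^2\,dx=\frac1b-\frac{\tau}{b}\int_0^1xy_n^2\,dx$ as a sum over subintervals and then equating individual summands; equality of two sums does not give equality of their terms, so this step is invalid as written, and your plan to ``use the normalization split over the subintervals'' inherits the same defect without flagging it. Until these two points are repaired, both your argument and the paper's remain incomplete at the level of Lemma \ref{lm}, on which everything else (including Theorem \ref{theo11}) rests.
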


\begin{proof} According to Lemma \ref{lm},
\begin{eqnarray*}\frac{d}{d\tau}\Big[\frac{\lambda_n(\tau)}{\lambda_{n-1}(\tau)}\Big]=
\frac{\lambda_n(\tau)}{\lambda_{n-1}(\tau)}
\int_0^1x(y_{n-1}^2(x,\tau)-y_n^2(x,\tau))dx\geq0.\end{eqnarray*}
Then,
$$\frac{\lambda_n(\tau)}{\lambda_{n-1}(\tau)}\geq
\frac{\lambda_n(0)}{\lambda_{n-1}(0)}=\big(\frac{n}{n-1}\big)^2.$$
Then $$\frac{\lambda_n (\hat{\rho})}{\lambda_{m}(\hat{\rho}) }\geq \
\big(\frac{n}{m}\big)^2.$$
\end{proof}

We are now ready to prove Theorem \ref{theo11}.

\begin{proof}
 We define $\rho(x,\tau)=\tau\rho(x)+(1-\tau)\hat{\rho}(x),$ then
from \eqref{Ra}
\begin{eqnarray*}&&\frac{d}{d\tau}\Big[\frac{\lambda_n(\tau)}{\lambda_{n-1}(\tau)}\Big]=
\frac{\lambda_n(\tau)}{\lambda_{n-1}(\tau)}
\int_0^1[\rho(x)-\hat{\rho}(x)](y_{n-1}^2(x,\tau)-y_n^2(x,\tau))dx\geq0\cr&=&
\frac{\lambda_n(\tau)}{\lambda_{n-1}(\tau)}
\sum_{i=0}^{n}\int_{z_i}^{z_{i+1}}[\rho(x)-\hat{\rho}(x)](y_{n-1}^2(x,\tau)-y_n^2(x,\tau))dx
.\end{eqnarray*}We notice that
$$\int_{z_i}^{z_{i+1}}[\rho(x)-\hat{\rho}(x)](y_{n-1}^2(x,\tau)-y_n^2(x,\tau))dx\geq0.$$
It then follows that,
$$\frac{d}{d\tau}\Big[\frac{\lambda_n(\tau)}{\lambda_{n-1}(\tau)}\Big]\geq0.$$
Thus, by the continuity of eigenvalues, we obtain
$$\frac{\lambda_n(\rho)}{\lambda_{n-1}(\rho)}=\frac{\lambda_n(1)}{\lambda_{n-1}(1)}\geq
\frac{\lambda_n(0)}{\lambda_{n-1}(0)}=\frac{\lambda_n(\hat{\rho})}{\lambda_{n-1}(\hat{\rho})}
.$$ And hence
$$\frac{\lambda_n(\rho)}{\lambda_{m}(\rho)}\geq
\big(\frac{n}{m}\big)^2 .$$ According to Lemma \eqref{theo13},
equality holds, if $\rho=\hat{\rho}=cte$.
\end{proof}
\begin{corollary}\label{theo112}
Let $\rho$ a concave density on $[0,1]$. Then the eigenvalues of the
Dirichlet problem \eqref{1.1}-\eqref{1.2} satisfy
   \begin{equation}\label{RA}
  \lambda_{n}-\lambda_{m}\geq\Big((\frac{n}{m})^2-1\Big)\frac{(m\pi)^2}{\rho_M},
   \end{equation} where $(\rho)_M=\max_{x\in[0,1]}\rho(x).$\\
Equality if and only if $\rho$ is constant.
 \end{corollary}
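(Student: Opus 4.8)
The plan is to deduce the additive gap in \eqref{RA} from the multiplicative ratio already established in Theorem~\ref{theo11}, together with a one-sided lower bound on the single eigenvalue $\lambda_m$. Set $r=(n/m)^2$. By Theorem~\ref{theo11} we have $\lambda_n/\lambda_m\geq r$, and since $r>1$ and all eigenvalues are positive,
$$\lambda_n-\lambda_m=\Big(\frac{\lambda_n}{\lambda_m}-1\Big)\lambda_m\geq (r-1)\,\lambda_m .$$
Thus the whole statement follows once we show $\lambda_m\geq (m\pi)^2/\rho_M$.

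To get this lower bound I would invoke the standard monotone dependence of the eigenvalues of \eqref{1.1}-\eqref{1.2} on the density, via the min--max characterization
$$\lambda_m=\min_{\substack{V\subset H_0^1(0,1)\\ \dim V=m}}\ \max_{0\neq y\in V}\ \frac{\int_0^1 y'(x)^2\,dx}{\int_0^1\rho(x)\,y(x)^2\,dx}.$$
Since $0<\rho(x)\leq\rho_M$ on $[0,1]$, one has $\int_0^1\rho\,y^2\leq\rho_M\int_0^1 y^2$ for every admissible $y$, so each Rayleigh quotient associated with $\rho$ dominates the one associated with the constant density $\rho_M$; taking the max over $y\in V$ and then the min over $m$-dimensional $V$ preserves the inequality, giving $\lambda_m(\rho)\geq\lambda_m(\rho_M)$. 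But $-y''=\lambda\rho_M y$, $y(0)=y(1)=0$, has eigenvalues $(m\pi)^2/\rho_M$ with eigenfunctions $\sin m\pi x$, hence $\lambda_m\geq (m\pi)^2/\rho_M$. Substituting into the previous display yields the claimed bound.

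For the equality clause, note that equality in \eqref{RA} forces simultaneously $\lambda_n/\lambda_m=(n/m)^2$ and $\lambda_m=(m\pi)^2/\rho_M$, because $\lambda_n-\lambda_m=(\lambda_n/\lambda_m-1)\lambda_m\geq (r-1)\lambda_m\geq (r-1)(m\pi)^2/\rho_M$, and each step is strict unless the corresponding identity holds. The first identity already forces $\rho$ to be constant by the equality statement of Theorem~\ref{theo11}; conversely, if $\rho\equiv c$ then $\rho_M=c$ and $\lambda_k=(k\pi)^2/c$ for all $k$, so $\lambda_n-\lambda_m=\pi^2(n^2-m^2)/c=\big((n/m)^2-1\big)(m\pi)^2/\rho_M$, and equality indeed holds. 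The argument is essentially immediate once Theorem~\ref{theo11} is available; the only point requiring a little care is the comparison step for $\lambda_m$ — that enlarging the weight $\rho$ cannot increase the eigenvalues — together with the observation that equality in the corollary needs the ratio bound and the weight bound to be attained at the same time, which is exactly the constant-density situation.
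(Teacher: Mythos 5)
Your argument is correct: the paper states this corollary without proof, and your derivation --- writing $\lambda_n-\lambda_m=\big(\tfrac{\lambda_n}{\lambda_m}-1\big)\lambda_m$, applying Theorem \ref{theo11} to the first factor and the min--max comparison $\lambda_m(\rho)\geq\lambda_m(\rho_M)=(m\pi)^2/\rho_M$ to the second --- is exactly the natural route the statement is meant to follow. Your equality analysis (both factors must attain their bounds simultaneously, each forcing $\rho$ constant) is also sound; the only implicit hypothesis worth making explicit is $n>m$, which the paper assumes throughout.
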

\section{Eigenvalue ratios for Sturm-Liouville problems with $q\equiv0$.}
In this section, we derive the more general bounds on eigenvalue
ratios that can be obtained in the absence of the potential $q$.
\begin{theorem}\label{theo13} Consider the regular Sturm-Liouville
problem $-(p(x)y^{\prime})^ {\prime }=\lambda \rho (x)y$ with with
Dirichlet boundary conditions \eqref{1.2}. If $p\rho$ a concave
function on $[0,1]$ then
   \begin{eqnarray}\label{12}
 \frac{\lambda_{n}}{\lambda_{m}}\geq (\frac{n}{m})^2.
 \end{eqnarray}
 Equality holds iff $p\rho$ is constant in $[0,1]$.
 \end{theorem}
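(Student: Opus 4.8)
The plan is to reduce Theorem \ref{theo13} to the string case already treated in Theorem \ref{theo11} by means of the classical Liouville change of the independent variable. For $-(py')'=\lambda\rho y$ on $[0,1]$ with $y(0)=y(1)=0$, set
$$t=t(x):=\int_0^x\frac{ds}{p(s)},\qquad L:=\int_0^1\frac{ds}{p(s)},$$
which, $p$ being continuous and positive, is a strictly increasing $C^1$ diffeomorphism of $[0,1]$ onto $[0,L]$. Writing $u(t):=y(x(t))$ and using the identity $x'(t)=p(x(t))$ one gets $p\,y'=\dot u$, hence $(py')'=\ddot u/p$, so the equation turns into the pure string equation $-\ddot u=\lambda\,\hat\sigma(t)\,u$ on $[0,L]$ with $\hat\sigma(t):=(p\rho)(x(t))$ and $u(0)=u(L)=0$; the Dirichlet spectrum is unchanged, so $\lambda_n$ is the $n$-th eigenvalue of this string. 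Rescaling by $\tau=t/L$, $v(\tau):=u(L\tau)$, turns it into $-v''=\lambda\,\tilde\rho(\tau)\,v$ on $[0,1]$, $v(0)=v(1)=0$, with $\tilde\rho(\tau)=L^2(p\rho)(x(L\tau))$ and still the same eigenvalues $\lambda_n$.

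If $\tilde\rho$ is concave on $[0,1]$, then Theorem \ref{theo11} applies directly and yields $\lambda_n/\lambda_m\ge(n/m)^2$, with equality iff $\tilde\rho$ is constant; since $\tau\mapsto x(L\tau)$ maps $[0,1]$ bijectively onto $[0,1]$, the condition "$\tilde\rho$ constant'' is the same as "$p\rho$ constant'', which is exactly the asserted equality statement. Thus everything hinges on the concavity of the transformed density $\tilde\rho$.

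This last point is the step I expect to be the main obstacle. A direct computation, again using $x'(t)=p(x(t))$, gives
$$\tilde\rho''(\tau)=L^4\,p\bigl(x(L\tau)\bigr)\,\Bigl(p\,(p\rho)'\Bigr)'\Big|_{x=x(L\tau)},$$
so $\tilde\rho$ is concave on $[0,1]$ precisely when $x\mapsto p(x)\,(p\rho)'(x)$ is nonincreasing on $[0,1]$. Since $\bigl(p\,(p\rho)'\bigr)'=p\,(p\rho)''+p'\,(p\rho)'$ and the first summand is $\le 0$ by the hypothesis that $p\rho$ is concave, the real work is to control the sign of the remaining term $p'\,(p\rho)'$, which I would do along the lines of Section~2 by exploiting the monotonicity of $p\rho$ and the location of the nodal points $z_i$ of $u_{n-1}$. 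Alternatively, one can bypass the appeal to Theorem \ref{theo11} and re-run the homotopy argument of Section~2 directly for the string $-\ddot u=\lambda\hat\sigma u$ in the variable $t$, applying Lemma \ref{LM1} with $g(t)=t$ and $g(t)=t^2$: the nodal combinatorics and the contradiction argument are identical, and the only hypothesis that enters is the concavity of $\hat\sigma$ as a function of $t$.
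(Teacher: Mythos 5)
Your overall route---the Liouville/Legendre change of variable $t=\int_0^x ds/p$ reducing $-(py')'=\lambda\rho y$ to a string equation with density proportional to $(p\rho)(x(t))$, followed by an appeal to Theorem \ref{theo11}---is exactly the route the paper takes via the substitution \eqref{SB}. But you have correctly put your finger on the step that is not carried out: concavity of $p\rho$ as a function of $x$ does \emph{not} imply concavity of $\hat\sigma(t)=(p\rho)(x(t))$ as a function of $t$, and Theorem \ref{theo11} needs the latter. Your own computation shows $\ddot{\hat\sigma}=p\cdot\bigl(p\,(p\rho)'\bigr)'$, and the extra term $p'\,(p\rho)'$ is genuinely uncontrolled by the hypotheses. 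A concrete instance: take $p(x)=e^{x}$ and $\rho(x)=e^{-x}(1+x)$, so that $p\rho=1+x$ is concave (indeed affine); then $\sigma t=1-e^{-x}$, i.e. $x(t)=-\ln(1-\sigma t)$, which is strictly convex in $t$, so $\hat\sigma(t)$ is strictly convex --- the opposite of what is required. Your fallback of re-running the Section 2 homotopy directly in the $t$ variable does not repair this, since, as you note yourself, the only hypothesis that argument consumes is concavity of $\hat\sigma$ in $t$, which is precisely what is missing.

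So the proposal is not a complete proof: the sentence promising to ``control the sign of $p'(p\rho)'$ along the lines of Section 2'' is a placeholder for the essential missing step, and the example above shows that no such control follows from concavity of $p\rho$ alone. In fairness, the paper's own proof of this theorem simply declares the estimate a direct consequence of Theorem \ref{theo11} after the substitution \eqref{SB} and is silent on exactly this point; as written, the reduction is only valid when $t$ is an affine function of $x$ (e.g.\ $p$ constant), or more generally under the additional hypothesis that $x\mapsto p(x)\,(p\rho)'(x)$ is nonincreasing. Either such a hypothesis must be added to the statement, or a genuinely different argument is needed.
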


\begin{proof} By use the Legendre substitution \cite[pp.
227-228]{L}
\begin{eqnarray}\label{SB}
t(x)=\frac{1}{\sigma}\int_{0}^{x}\frac{1}{p(z)}dz,~~
\sigma=\int_{0}^{1}\frac{1}{p(z)}dz,\end{eqnarray} Equation
\eqref{1.1S} can be rewritten in the string equation
$$-\ddot{y}=\lambda \sigma^2\tilde{p}(t)\tilde{\rho}(t)y,$$ where
$\tilde{p}(t)=p(x)$ and $\tilde{\rho}(t)=\rho(x)$. Thus the estimate
\eqref{12} is direct consequence of Theorem \ref{theo11}.
\end{proof}
\begin{corollary}\label{theo13} Consider the regular Sturm-Liouville
problem $-(p(x)y^{\prime})^ {\prime }=\lambda \rho (x)y$ with with
Dirichlet boundary conditions \eqref{1.2}. If $p\rho$ a concave
function on $[0,1]$ then
   \begin{equation}\label{RA}
  \lambda_{n}-\lambda_{m}\geq\Big((\frac{n}{m})^2-1\Big)\frac{(m\pi)^2}{(p\rho)_M},
   \end{equation} where $(p\rho)_M=\max_{x\in[0,1]}(p(x)\rho(x)).$\\
 Equality holds iff $p\rho$ is constant in $[0,1]$.
 \end{corollary}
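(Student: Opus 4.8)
The plan is to deduce this difference estimate from the ratio estimate of Theorem~\ref{theo13}, in the same way Corollary~\ref{theo112} follows from Theorem~\ref{theo11}. First I would write
$$\lambda_n-\lambda_m=\lambda_m\Big(\frac{\lambda_n}{\lambda_m}-1\Big),$$
and combine Theorem~\ref{theo13} with the elementary fact $\frac{\lambda_n}{\lambda_m}-1\ge\big(\tfrac{n}{m}\big)^2-1\ge0$ (valid since $n>m$) to get $\lambda_n-\lambda_m\ge\lambda_m\Big(\big(\tfrac{n}{m}\big)^2-1\Big)$. Thus the whole statement reduces to a lower bound for the $m$-th Dirichlet eigenvalue $\lambda_m$ alone.

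For that bound I would pass to string form via the Legendre substitution \eqref{SB} already used in the proof of Theorem~\ref{theo13}: under $t=t(x)$ the problem $-(py')'=\lambda\rho y$, $y(0)=y(1)=0$, becomes $-\ddot y=\lambda\,w(t)\,y$ on $[0,1]$ with Dirichlet conditions, where $w(t)=\sigma^{2}\tilde p(t)\tilde\rho(t)$ and $\sigma=\int_0^1 p(z)^{-1}dz$. Since $w(t)\le w_M:=\sigma^{2}(p\rho)_M$ and the $m$-th Dirichlet eigenvalue of a string is monotone decreasing in the density (by the Rayleigh quotient), comparison with the constant-density string $-\ddot y=\lambda\,w_M\,y$, whose $m$-th eigenvalue is $(m\pi)^{2}/w_M$, gives $\lambda_m\ge(m\pi)^{2}/w_M$; this step needs no concavity. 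Inserting it into the first display yields $\lambda_n-\lambda_m\ge\Big(\big(\tfrac{n}{m}\big)^2-1\Big)(m\pi)^{2}/w_M$, which is \eqref{RA} once the scaling factor $\sigma$ is normalized as in the statement (in general one gets $(p\rho)_M$ replaced by $\sigma^{2}(p\rho)_M$). Alternatively one may simply apply Corollary~\ref{theo112} to the transformed string equation, provided one first checks that $w$ inherits concavity from $p\rho$.

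For the equality case I would argue as usual: if $p\rho$ is constant then $w$ is constant, the comparison above is an equality, and the explicit constant-coefficient spectrum makes \eqref{RA} an equality; conversely, equality in \eqref{RA} forces $\lambda_n/\lambda_m=(n/m)^2$ (since $\lambda_m>0$), whence $p\rho$ is constant by Theorem~\ref{theo13}. I expect the only delicate point to be bookkeeping rather than a new estimate: unlike the ratio $\lambda_n/\lambda_m$ of Theorem~\ref{theo13}, which is unchanged when the density is multiplied by a positive constant, the difference $\lambda_n-\lambda_m$ is not, so one must carry the factor $\sigma^{2}$ produced by \eqref{SB} through the argument and check that it combines with $(p\rho)_M$ to give exactly the constant asserted in \eqref{RA}.
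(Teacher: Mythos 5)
The paper offers no proof of this corollary, so there is nothing to compare your argument against line by line; but your reduction --- writing $\lambda_n-\lambda_m=\lambda_m\big(\tfrac{\lambda_n}{\lambda_m}-1\big)$, applying the ratio bound $\tfrac{\lambda_n}{\lambda_m}\geq(\tfrac{n}{m})^2$ of the preceding theorem, and then bounding $\lambda_m$ from below by comparing the transformed string with a constant-density string --- is clearly the intended route, and each step is sound. The important point is the one you relegate to ``bookkeeping'' at the end: you should carry it through, because it shows the constant in the statement is wrong as printed. The Legendre substitution \eqref{SB} produces the string density $w=\sigma^2\tilde p\tilde\rho$ with $\sigma=\int_0^1 p(z)^{-1}dz$, so the Rayleigh-quotient comparison yields $\lambda_m\geq (m\pi)^2/\big(\sigma^2(p\rho)_M\big)$, and hence the difference bound with $\sigma^2(p\rho)_M$, not $(p\rho)_M$, in the denominator. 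These agree only when $\sigma=1$, and the stated inequality is actually false otherwise: for $p\equiv 1/2$, $\rho\equiv 1$ one has $\lambda_k=(k\pi)^2/2$, so $\lambda_n-\lambda_m=(n^2-m^2)\pi^2/2$, while the right-hand side of \eqref{RA} equals $2(n^2-m^2)\pi^2$. (The string case, Corollary \ref{theo112}, is unaffected, since there $p\equiv1$ and $\sigma=1$.) Two smaller remarks: the equality discussion survives with the corrected constant, since constant $p\rho$ makes $w$ constant and both the ratio and the comparison become equalities; and your alternative suggestion of applying Corollary \ref{theo112} directly to the transformed equation is riskier than your main route, because concavity of $p\rho$ in $x$ does not in general transfer to concavity of $w$ in $t$ under the nonlinear change of variable --- your primary argument avoids this, since the lower bound on $\lambda_m$ needs no concavity at all.
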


~~~~~~~~~~~\\ {\bf{Acknowledgement}}.{ Research supported by Partial
differential equations laboratory $(LR03ES04),$ at the Faculty of
Sciences of Tunis, University of Tunis El Manar, $2092,$ Tunis,
Tunisia}

\vskip 3cm
\end{document}